\numberwithin{equation}{section}
\theoremstyle{definition}
\newtheorem{definition}{Definition}
\newtheorem{theorem}{Theorem}
\newtheorem{lemma}{Lemma}
\newtheorem{proposition}{Proposition}
\theoremstyle{remark}
\newtheorem{remark}{Remark}
\newtheorem*{acknowledgement}{Acknowledgement}
\title[Limiting behavior of determinantal point processes]{Limiting behavior of determinantal point processes associated with weighted Bergman kernels}
\author{Kiyoon Eum}
\address{Department of Mathematical Sciences, KAIST, 291 Daehak-ro, Yuseong-gu, Daejeon 34141, South Korea}
\email{kyeum@kaist.ac.kr}
\begin{document}

\begin{abstract}
Let $\Omega$ be a bounded pseudoconvex domain in $\mathbb{C}^n$, and let $\phi$ be a strictly plurisubharmonic function on $\Omega$. For each $k\in\mathbb{N}$, we consider determinantal point process $\Lambda_k$ with kernel $K_{k\phi}$, where $K_{k\phi}$ is the reproducing kernel of infinite dimensional weighted Bergman space $H(k\phi)$ with weight $e^{-k\phi}$. We show that the scaled cumulant generating function for $\Lambda_k$ converges as $k\rightarrow\infty$ to a certain limit, which can be explicitly expressed in terms of $\phi$ and a test function $u$. Note that we need to restrict the type of test function $u$ to those that are $\phi$-admissible.
\end{abstract}

\maketitle

\section{Introduction}
In \cite{berman2010growth}, Berman-Boucksom proved that the normalized volume ratio of $L^2$ unit balls in $H^0(X,L^k)$ converges to a certain limit, as $k\rightarrow\infty$. As a corollary, they obtained a limit of scaled cumulant generating functions of determinantal point processes (DPPs) associated with weighted Bergman kernels. 

More precisely, let $(L,h=e^{-\phi})$ be a Hermitian line bundle with positive curvature over a compact complex manifold $X$ with some volume form. The $k^{th}$ power $L^k$ of $L$ inherits metric $e^{-k\phi}$. With the $L^2$-inner product on $H^0(X,L^k)$ induced by $e^{-k\phi}$ and the given volume form, let $K_{k}$ be Bergman type orthogonal projection kernel from the space of all $L^2$-sections onto $H^0(X,L^k)$. For each $K_{k}$ we can associate DPP $\Lambda_k$ on $X$ with kernel $K_{k}$. Then the following result holds (\cite{berman2010growth}, Corollary A, (ii)):
\begin{equation}\label{bb}
\lim_{k\rightarrow\infty}\frac{1}{kN_k}\log \mathbb{E}\left[e^{-k\langle u,\Lambda_k\rangle} \right] =  \mathcal{E}_{eq}(\phi)-\mathcal{E}_{eq}(\phi+u)  
\end{equation}
for any continuous function $u$ on $X$ and $N_k=\dim H^0(X,L^k)$. See \cite{berman2014determinantal} for probabilistic interpretation of Corollary A of \cite{berman2010growth} as presented here. 

The left-hand side of of (\ref{bb}) is a scaled cumulant generating function for $\Lambda_k$, which plays a key role in \cite{berman2014determinantal} in the proof of the large deviation principle for such processes. Since the base manifold $X$ is compact, the spaces of holomorphic sections $H^0(X,L^k)$ are finite dimensional and the proof in particular uses Jacobi's formula for the derivative of the determinant of a finite matrix. 

In this note, we show that the finite-dimensional Jacobi's formula can be replaced with an infinite-dimensional version (\ref{Jac}) involving the Fredholm determinant, yielding a similar limit result (Theorem \ref{main}) in the setting where the Bergman-type kernel is of infinite rank. We consider DPPs $\Lambda_k$ on bounded pseudoconvex domain $\Omega \subset \mathbb{C}^n$, associated with weighted Bergman spaces $H(k\phi)$ with respect to the Lebesgue measure, for a strictly plurisubharmonic weight $\phi$. In this setting, we must restrict the class of test functions; see Definition \ref{u}. 

For recent results on DPP associated with Hilbert spaces of holomorphic functions such as Bergman spaces or Fock spaces, see \cite{bufetov2017determinantal} and \cite{bufetov2022patterson}.

\section{Determinantal point process}
Let $\Omega$ be a locally compact Polish space and $\mu$ be a Radon measure on $\Omega$. A \emph{point process} $\Lambda$ on $\Omega$ is a random integer-valued positive Radon measure on $\Omega$.  If $\Lambda$ almost surely assigns at most measure 1 to singletons, it is called a \emph{simple point process}. 

The \emph{Correlation functions} $\rho_n$ of simple point process $\Lambda$ with respect to the measure $\mu$ is defined (if exist) by
\begin{equation}
\mathbb{E}\left[\prod_{i=1}^{n} \Lambda(D_j) \right] = \int_{\prod_{i=1}^{n}D_j} \rho_n(x_1,...,x_n) d\mu(x_1)...d\mu(x_n)
\end{equation}
for any mutually disjoint subsets $D_1,...,D_n$ of $\Omega$. The correlation functions provide a convenient integral formula for multiplicative functionals of the point process: For any test function $g\in C_c(\Omega)$, 
\begin{equation}
\mathbb{E}\left[\sum_{(\lambda_1,...,\lambda_n)\subset\Lambda}\prod_{k=1}^{n}g(\lambda_k) \right]=\int_{\Omega^n} \rho_{n}(x_1,...,x_n)\prod_{k=1}^{n}g(x_k)d\mu(x_1)...d\mu(x_n).
\end{equation}
As a consequence, we have a following formula.
\begin{proposition}
For any test function $g\in C_{c}(\Omega)$,
\begin{align}\label{correlationfunction}
&\mathbb{E}\left[ \prod_{\lambda\in\Lambda}(1+g(\lambda)) \right] = \sum_{n=0}^{\infty} \frac{1}{n!}\mathbb{E}\left[ \sum_{(\lambda_1,...,\lambda_n)\subset\Lambda}\prod_{k=1}^{n}g(\lambda_k) \right]\\ 
&=\sum_{n=0}^{\infty} \frac{1}{n!}\int_{\Omega^n} \rho_{n}(x_1,...,x_n)\prod_{k=1}^{n}g(x_k)d\mu(x_1)...d\mu(x_n).
\end{align}

\end{proposition}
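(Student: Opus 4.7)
The plan is to expand the product over $\Lambda$ as a sum over finite subconfigurations and then take expectations termwise; the second equality is then just the correlation-function formula applied to each summand.

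First I would fix a realization $\Lambda = \{\lambda_1,\ldots,\lambda_N\}$ (a locally finite configuration; since $g$ has compact support, only finitely many $\lambda_i$ satisfy $g(\lambda_i)\neq 0$, so the product is finite almost surely). The elementary algebraic identity
\begin{equation*}
\prod_{j=1}^{N}\bigl(1+g(\lambda_j)\bigr)=\sum_{S\subseteq\{1,\ldots,N\}}\prod_{j\in S}g(\lambda_j)=\sum_{n=0}^{N}\sum_{(\lambda_{i_1},\ldots,\lambda_{i_n})\subset\Lambda}\prod_{k=1}^{n}g(\lambda_{i_k})
\end{equation*}
gives the pointwise form of the first equality, where the inner sum runs over unordered $n$-subsets of $\Lambda$. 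The factor $1/n!$ appears when one prefers to sum over ordered tuples of distinct points, which is the form used on the right-hand side of the claim.

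Second, I would take expectations and justify exchanging sum and expectation. Because $g\in C_c(\Omega)$ is bounded, say $\|g\|_\infty\leq M$, and on the support of $g$ the configuration $\Lambda$ has only finitely many points (with finite expected number, assuming the first correlation function is locally integrable — which is implicit in the statement that correlation functions exist), the sum terminates almost surely and is dominated by $(1+M)^{\Lambda(\operatorname{supp} g)}$. Under this integrability hypothesis, Fubini/dominated convergence yields
\begin{equation*}
\mathbb{E}\Bigl[\prod_{\lambda\in\Lambda}(1+g(\lambda))\Bigr]=\sum_{n=0}^{\infty}\frac{1}{n!}\,\mathbb{E}\Bigl[\sum_{(\lambda_1,\ldots,\lambda_n)\subset\Lambda}\prod_{k=1}^{n}g(\lambda_k)\Bigr],
\end{equation*}
which is the first claimed equality.

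Finally, the second equality is immediate from the multiplicative-functional formula displayed just before the proposition, applied termwise to each $n$. The main obstacle, and really the only non-formal point, is the interchange of expectation and the infinite sum; this is where one must invoke the compact support of $g$ together with local integrability of $\rho_1$ (or equivalently, local finiteness in expectation of $\Lambda$) so that dominated convergence applies. In the DPP setting of interest later in the paper this is automatic, since the kernels $K_{k\phi}$ are locally bounded on $\Omega$.
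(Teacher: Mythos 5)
Your overall route is exactly the one the paper intends: the paper states this proposition without proof, as an immediate consequence of the multiplicative-functional formula displayed just before it, obtained by expanding $\prod_\lambda(1+g(\lambda))$ over finite subconfigurations and taking expectations termwise. So the decomposition and the termwise application of the correlation-function formula are fine and match the paper.

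The one point where your justification is not quite right is the interchange of expectation and the infinite sum, which you yourself identify as the only non-formal step. Local integrability of $\rho_1$ gives $\mathbb{E}\left[\Lambda(\operatorname{supp} g)\right]<\infty$, but that does \emph{not} make your proposed dominating function $(1+\|g\|_\infty)^{\Lambda(\operatorname{supp} g)}$ integrable: exponential moments of the particle number on $\operatorname{supp} g$ do not follow from a finite first moment, so "Fubini/dominated convergence under this integrability hypothesis" is a non sequitur as stated. The standard fix, and the one relevant to this paper, is to verify absolute convergence of the right-hand series directly: for a determinantal process one has Hadamard's inequality for the positive-definite matrix $\left[K(x_i,x_j)\right]$, namely $\det\left[K(x_i,x_j)\right]\leq \prod_i K(x_i,x_i)$, so
\begin{equation*}
\sum_{n\geq 0}\frac{1}{n!}\int_{\Omega^n}\rho_n\prod_{k}|g(x_k)|\,d\mu^{\otimes n}\leq \exp\left(\|g\|_\infty\int_{\operatorname{supp} g}K(x,x)\,d\mu(x)\right)<\infty,
\end{equation*}
the last integral being finite because $K$ is locally trace class (in the paper's setting, because $K_{k\phi}(z,z)$ is locally bounded). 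This absolute convergence both legitimizes the termwise expectation (Fubini for the sum over $n$ together with the ordered-tuple rewriting that produces the $1/n!$) and guarantees that all quantities in the statement are finite. With that substitution your argument is complete; for a general simple point process one must simply add this absolute-convergence hypothesis, since the identity can otherwise fail to make sense for signed $g$.
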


A point process $\Lambda$ on $\Omega$ is said to be a \emph{determinantal point process (DPP)} with kernel $K$ if it is simple and its correlation functions with respect to the measure $\mu$ satisfy
\begin{equation}\label{corr}
    \rho_n(x_1,...,x_n)=\det\left[K(x_i,x_j)\right]_{1\leq i,j\leq n}
\end{equation}
for every $k\in\mathbb{N}$ and $x_1,...,x_n\in\Omega$. A Theorem of Macchi-Soshnikov \cite{macchi1975coincidence,soshnikov2000determinantal} shows that if $K$ defines a self-adjoint locally trace class integral operator $\mathcal{K}$ on $L^2(d\mu)$, DPP with kernel $K$ exists uniquely if and only if $spec(\mathcal{K})\subset [0,1]$. In that case $\mathcal{K}$ is a finite rank operator with rank $\mathcal{K}\leq N$ if and only if $\mathbb{P}[\Lambda(\Omega)\leq N] = 1$.

\section{Weighted Bergman Space}
Let $d\lambda$ denote the Lebesgue measure on a domain $\Omega$ in $\mathbb{C}^n$. We denote the space of holomorphic functions on $\Omega$ by $\mathcal{O}(\Omega)$. It is well known that the space $H(k\phi):=L^2(e^{-k\phi}d\lambda)\cap\mathcal{O}(\Omega)$, i.e., the space of holomorphic functions that are square-integrable with respect to the weight $e^{-k\phi}$ where $\phi\in C(\Omega)$, is a separable reproducing kernel Hilbert space with reproducing kernel $K_{k\phi}(z,w)$. That is,
\begin{equation}
    f(z)=\int_{\Omega}K_{k\phi}(z,w)f(w)e^{-k\phi(w)}d\lambda(w)=\langle f, K_{k\phi,z} \rangle_{k\phi} 
\end{equation}
for $f\in H(k\phi)$, where $\langle\cdot,\cdot\rangle_{k\phi}$ denotes the $L^2$-inner procuct with respect to measure $e^{-k\phi}d\lambda$. The kernel $K_{k\phi}(z,w)$ is Hermitian positive and can be expanded in terms of orthonormal basis $(e_j)_{j=1}^{\infty}$ of $H(k\phi)$ of the form
\begin{equation}
K_{k\phi}(z,w)=\sum_{j=1}^{\infty} e_j(z)\overline{e_j(w)}  
\end{equation}
where the convergence is locally uniform on $\Omega\times\Omega$ and $L^2(e^{-k\phi}d\lambda)$ in each variable separately. In particular for any compact subset $K$ of $\Omega$ there exists a constant $C_K$ such that
\begin{equation}\label{Fubini}
\sup_{z\in K} \sum_j |e_j(z)|^2 \leq C_K.   
\end{equation}

Let $SPSH(\Omega)=\{\varphi\in C^{\infty}(\Omega) : \partial_{z^j}\overline{\partial_{z^k}}\varphi(z)>0 \;\text{for all}\,z\in\Omega\}$. It is a positve convex cone of strictly plurisubharmonic (psh) funcions on $\Omega$. From now on we always assume $\Omega$ to be a bounded pseudoconvex domain in $\mathbb{C}^n$ and $\phi\in SPSH(\Omega)$. Then we have a following asymptotics of $K_{k\phi}$.
\begin{theorem}[\protect{\cite[Theorem 1]{englivs2002weighted}}]
Let $\Omega$ be a bounded pseudoconvex domain in $\mathbb{C}^n$ and $\phi\in SPSH(\Omega)$. Then
\begin{equation}
\lim_{k\rightarrow\infty} \frac{1}{k^n}K_{k\phi}(z,z)e^{-k\phi(z)}=\frac{1}{\pi^n}\det\left[\partial\overline{\partial}\phi(z)\right]
\end{equation}
for all $z\in\Omega$.
\end{theorem}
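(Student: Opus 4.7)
The plan is to combine the extremal characterization of the Bergman kernel,
\[
K_{k\phi}(z_0,z_0) = \sup\{|f(z_0)|^2 : f\in H(k\phi),\ \|f\|_{k\phi}\le 1\},
\]
with a rescaling that replaces a neighborhood of $z_0$ by $\mathbb{C}^n$ equipped with a Gaussian weight whose Hermitian form is $A := \partial\bar\partial\phi(z_0)$. First I would pass to a local normal form by writing $\phi(z)=\phi(z_0)+2\operatorname{Re}P(z-z_0)+\langle A(z-z_0),z-z_0\rangle+O(|z-z_0|^3)$, where $P$ is the holomorphic Taylor polynomial of degree $\le 2$ that absorbs the pluriharmonic part of $\phi$. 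Since $e^{-kP(z-z_0)}$ is a nonvanishing holomorphic factor, multiplication by it is an isometry-type operation (up to the constant $e^{-k\phi(z_0)}$) that locally reduces the weight to $e^{-k\langle A(z-z_0),z-z_0\rangle}$ modulo a cubic error.

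For the upper bound I would apply the rescaling $z=z_0+w/\sqrt{k}$. On $|w|\le k^{1/8}$ the rescaled weight is $e^{-k\phi(z_0)}e^{-\langle Aw,w\rangle}(1+o(1))$ and the Jacobian produces a factor of $k^{-n}$. A normalized $f\in H(k\phi)$ becomes the holomorphic function $\tilde f(w)=k^{-n/2}e^{-kP(w/\sqrt{k})}f(z_0+w/\sqrt{k})$, which has $|\tilde f(0)|^2 = k^{-n}|f(z_0)|^2$ and bounded $L^2$-norm against $e^{-\langle Aw,w\rangle}$. Comparison with the explicit Fock-space reproducing kernel value $K^A(0,0)=\det(A)/\pi^n$ then yields $k^{-n}|f(z_0)|^2e^{-k\phi(z_0)}\le(1+o(1))\det(A)/\pi^n$.

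For the lower bound I would construct a near-extremal function via H\"ormander's $L^2$ estimates. Starting from $f_0(z)=\chi(z)\,e^{kP(z-z_0)}$, where $\chi$ is a smooth cutoff equal to $1$ near $z_0$, I would solve $\bar\partial u=\bar\partial f_0$ on the pseudoconvex domain $\Omega$ using H\"ormander's estimate with weight $k\phi+2n\log|z-z_0|$. The logarithmic singularity forces $u(z_0)=0$, and the inequality $\phi(z)-\phi(z_0)-2\operatorname{Re}P(z-z_0)\ge c|z-z_0|^2$ on a fixed ball (a consequence of strict plurisubharmonicity) makes the annular contribution where $\bar\partial\chi$ lives exponentially smaller than the Gaussian bulk. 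Then $f:=f_0-u$ is holomorphic with $f(z_0)=1$ and $\|f\|_{k\phi}^2=(1+o(1))\pi^n e^{k\phi(z_0)}/(k^n\det A)$, giving the matching lower bound.

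The main obstacle will be pinning down the exact constant $\det(A)/\pi^n$ rather than merely the correct scaling $k^n e^{k\phi(z_0)}$. Both directions require careful bookkeeping to show that three error sources --- the cubic Taylor remainder in $\phi$ on $|w|\le k^{1/8}$, the integral tail outside that ball, and the H\"ormander correction $u$ --- are each $o(k^n e^{k\phi(z_0)})$, so that neither bound acquires an extra multiplicative factor in the limit.
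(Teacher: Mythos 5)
This statement is not proved in the paper at all: it is quoted verbatim from Engli\v{s} \cite{englivs2002weighted}, so there is no internal argument to compare yours against. Your sketch is the standard ``peak section'' route to the leading-order asymptotics (in the spirit of Tian, Berndtsson and Berman's local holomorphic Morse inequalities): the extremal characterization $K_{k\phi}(z_0,z_0)=\sup\{|f(z_0)|^2:\|f\|_{k\phi}\le 1\}$, the $\sqrt{k}$-rescaling and comparison with the Fock kernel $\det A/\pi^n$ for the upper bound, and a H\"ormander $\bar\partial$-construction with the singular weight $k\phi+2n\log|z-z_0|$ for the lower bound. This is a genuinely different argument from Engli\v{s}'s cited proof, which proceeds through Laplace-type integral asymptotics and Berezin-transform machinery and yields a full asymptotic expansion $K_{k\phi}(z,z)e^{-k\phi}\sim k^n\sum_j b_j(z)k^{-j}$; your method only produces the leading coefficient, but that is precisely what the paper uses, and it is more self-contained, using only the reproducing-kernel extremal property, strict plurisubharmonicity of $\phi$ near $z_0$ (to get $\phi-\phi(z_0)-2\operatorname{Re}P\ge c|z-z_0|^2$ on a fixed small ball), and pseudoconvexity of $\Omega$ (to run H\"ormander). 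Two small points to fix in the write-up: the norm of your candidate should read $\|f\|_{k\phi}^2=(1+o(1))\,\pi^n e^{-k\phi(z_0)}/(k^n\det A)$ (the exponential factor has the wrong sign as written, though the claimed conclusion shows you intend the correct one), and in the upper bound you should make explicit the normal-families (or monotonicity) argument showing that the reproducing kernel at $0$ of the Gaussian-weighted space on the ball $|w|\le k^{1/8}$ converges to the Fock value $\det A/\pi^n$ as the radius tends to infinity, since that is where the exact constant enters.
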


In fact we have a following uniform upper bound on $K_{k\phi}$.

\begin{theorem}[\protect{\cite[Lemma 3.1]{berman2009poly}}]\label{upper}
Given any compact subset $K$ of $\Omega$, there exists a constant $C_K$ such that
\begin{equation}
\frac{1}{k^n}K_{k\phi}(z,z)e^{-k\phi(z)} \leq C_K 
\end{equation} on $K$.
The constant $C_K$ depends on $\phi$ and derivatives of $\phi$ continuously.
\end{theorem}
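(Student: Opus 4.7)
The plan is to reduce the diagonal pointwise bound on $K_{k\phi}(z_0,z_0)e^{-k\phi(z_0)}$ to a pointwise bound on individual unit-norm elements of $H(k\phi)$ via the standard extremal characterization
\[
K_{k\phi}(z_0,z_0) = \sup\{\,|f(z_0)|^2 : f \in H(k\phi),\ \|f\|_{k\phi}=1\,\}.
\]
It therefore suffices to show $|f(z_0)|^2 e^{-k\phi(z_0)} \leq C_K k^n \|f\|_{k\phi}^2$ uniformly for $z_0 \in K$.

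The main obstacle is picking the right radius. A sub-mean value inequality on a ball of fixed radius produces a factor $\sup_B e^{k(\phi-\phi(z_0))}$ that blows up exponentially in $k$, whereas shrinking to radius $O(1/k)$ kills that factor but produces $k^{2n}$ from the volume, missing the correct order by $k^n$. To hit the correct $k^n$ I will work on the ``quantum ball'' of radius $r/\sqrt k$, the natural scale of $K_{k\phi}$, and absorb the holomorphic part of the Taylor expansion of $\phi$ into a peak modification of $f$. Concretely, I will write
\[
\phi(w) - \phi(z_0) = 2\,\mathrm{Re}\,P_{z_0}(w) + H_{z_0}(w-z_0) + R_{z_0}(w),
\]
where $P_{z_0}$ is the degree-$\leq 2$ holomorphic polynomial built from the $(1,0)$- and $(2,0)$-parts of $\phi$ at $z_0$ (with $P_{z_0}(z_0)=0$), $H_{z_0}(\zeta) = \sum_{j,k}\partial_j\overline{\partial_k}\phi(z_0)\zeta_j\overline{\zeta_k}$ is the Levi form, and $R_{z_0}(w)=O(|w-z_0|^3)$ uniformly in $z_0 \in K$ since $\phi \in C^\infty$.

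Setting $g(w) := f(w)\,e^{-kP_{z_0}(w)}$, which is holomorphic on $\Omega$ with $g(z_0)=f(z_0)$, the holomorphic sub-mean value inequality applied to $|g|^2$ on $B(z_0,r/\sqrt k)\subset\Omega$ (valid for $k$ large since $\mathrm{dist}(K,\partial\Omega)>0$) gives, after multiplying by $e^{-k\phi(z_0)}$ and invoking the Taylor formula to replace the weight,
\[
|f(z_0)|^2 e^{-k\phi(z_0)} \leq \frac{k^n}{c_n r^{2n}} \int_{B(z_0,r/\sqrt k)} |f(w)|^2 e^{-k\phi(w)}\, e^{kH_{z_0}(w-z_0)+kR_{z_0}(w)}\, d\lambda(w).
\]
On the ball, the exponent $kH_{z_0}(w-z_0)+kR_{z_0}(w)$ is bounded by $r^2 M_K + O(r^3/\sqrt k)$ with $M_K:=\sup_K\|\partial\overline\partial\phi\|$, so the remaining integral is controlled by $\|f\|_{k\phi}^2$. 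Taking the supremum over unit-norm $f$ yields Theorem \ref{upper}. The continuous dependence of $C_K$ on $\phi$ and its derivatives is then evident, since only $M_K$ and a uniform bound on the third derivatives of $\phi$ on a slightly larger compact set enter the estimate.
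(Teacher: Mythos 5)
Your argument is correct and is essentially the proof the paper relies on: the paper does not reprove Theorem \ref{upper} but cites Berman's Lemma 3.1, whose proof is exactly this extremal characterization of $K_{k\phi}(z_0,z_0)$ combined with the sub-mean value inequality on balls of radius $r/\sqrt{k}$ after peeling off the holomorphic second-order part of the Taylor expansion of $\phi$. The only cosmetic point is that choosing $r\leq \mathrm{dist}(K,\partial\Omega)$ makes the estimate valid for all $k\geq 1$ rather than only for large $k$ (or one notes that the finitely many remaining $k$ are handled by local boundedness of $K_{k\phi}(z,z)$).
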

It is proved in \cite{berman2009poly} with a different setting from ours, but the proof shows that it holds in our setting also.

This, with the dominated convergence theorem implies the convergence of the so called Bergman measure (see \cite{berman2009bergman}). We normalize $d^c$ so that $\frac{1}{n!}(dd^c\cdot)^n=\frac{1}{\pi^n}\det(\partial\overline{\partial}\,\cdot)d\lambda$.
\begin{proposition}\label{convBerg}
For any test function $g\in C_c(\Omega)$, 
\begin{equation}
\int_{\Omega}  g(z)\frac{1}{k^n}K_{k\phi}(z,z)e^{-k\phi(z)}d\lambda(z) \rightarrow \frac{1}{n!}\int_{\Omega}g(dd^c \phi)^n    
\end{equation}
as $k\rightarrow\infty$.
\end{proposition}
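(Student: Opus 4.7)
The plan is to combine the pointwise asymptotics from Engli\v{s}'s theorem with the uniform upper bound from Theorem \ref{upper}, and conclude via the dominated convergence theorem, exactly as the text before the proposition suggests.

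First I would fix $g \in C_c(\Omega)$ and let $K = \mathrm{supp}(g)$, which is a compact subset of $\Omega$. On this compact set, Theorem \ref{upper} provides a constant $C_K$ such that
\begin{equation*}
\frac{1}{k^n} K_{k\phi}(z,z) e^{-k\phi(z)} \leq C_K \qquad \text{for all } z \in K \text{ and all } k.
\end{equation*}
Therefore the integrand $g(z)\,\tfrac{1}{k^n} K_{k\phi}(z,z) e^{-k\phi(z)}$ is dominated uniformly in $k$ by the function $C_K \|g\|_{\infty} \mathbf{1}_K(z)$, which is integrable with respect to Lebesgue measure since $K$ is compact in a bounded domain.

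Next, Engli\v{s}'s theorem gives the pointwise limit
\begin{equation*}
\frac{1}{k^n} K_{k\phi}(z,z) e^{-k\phi(z)} \longrightarrow \frac{1}{\pi^n}\det\bigl[\partial\overline{\partial}\phi(z)\bigr]
\end{equation*}
for every $z \in \Omega$. Multiplying by the fixed continuous function $g$, I can apply the dominated convergence theorem to pass the limit inside the integral, obtaining
\begin{equation*}
\int_{\Omega} g(z)\, \tfrac{1}{k^n} K_{k\phi}(z,z) e^{-k\phi(z)} d\lambda(z) \longrightarrow \int_{\Omega} g(z)\, \tfrac{1}{\pi^n}\det\bigl[\partial\overline{\partial}\phi(z)\bigr] d\lambda(z).
\end{equation*}
Finally, the stated normalization $\tfrac{1}{n!}(dd^c\,\cdot)^n = \tfrac{1}{\pi^n}\det(\partial\overline{\partial}\,\cdot) d\lambda$ rewrites the right-hand side as $\tfrac{1}{n!}\int_{\Omega} g\,(dd^c\phi)^n$, which is the desired limit.

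There is no real obstacle here, since the two ingredients (pointwise convergence and uniform compact bound) have been stated immediately above; the only mild point to verify is that the constant $C_K$ in Theorem \ref{upper} is independent of $k$, which is the content of that theorem. The rest is a direct application of dominated convergence.
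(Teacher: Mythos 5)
Your argument is exactly the one the paper intends: the sentence preceding the proposition states that Engli\v{s}'s pointwise asymptotics combined with the uniform bound of Theorem \ref{upper} and dominated convergence yield the result, and your write-up fills in precisely those steps (domination on $\mathrm{supp}(g)$, pointwise limit, normalization of $dd^c$). The proposal is correct and matches the paper's approach.
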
 

We need some operator theoretic notions on Bergman space. For general references, see \cite{simon2005trace} and \cite{zhu2007operator}. First note that integral operator on $L^2(e^{-k\phi}d\lambda)$ with integral kernel $K_{k\phi}(z,w)$ is an orthogonal projection of $L^2(e^{-k\phi}d\lambda)$ onto $H(k\phi)$. We denote this projection operator by $P_{k\phi}$, i.e.,
\begin{equation}
P_{k\phi}f(z)=\int_{\Omega}K_{k\phi}(z,w)f(w)e^{-k\phi(w)}d\lambda(w) 
\end{equation}
for $f\in L^2(e^{-k\phi}d\lambda)$. Given a function $g\in L^{\infty}(\Omega)$, we define an operator $T_{k\phi,g}$ on $H(k\phi)$ by
\begin{equation}
T_{g,k\phi}f=P_{k\phi}(gf),\;\; f\in H(k\phi).
\end{equation}
$T_{k\phi,g}$ is called the Toeplitz operator with symbol $g$. In this note we only concern following two types of (real-valued) symbols : 
\begin{equation}
g=e^{u} \;\text{for}\;\; u\in C_c(\Omega)\;\;\text{or}\;\; g\in C_c(\Omega).
\end{equation}
We will need the following properties of Toeplitz operators with such symbols.
\begin{lemma}\label{lue}
If $g=e^{u}, u\in C_c(\Omega)$, $T_{k\phi,g}$ is invertible. If $g\in C_c(\Omega)$, $T_{k\phi,g}$ is a trace class operator.
\end{lemma}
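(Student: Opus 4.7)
My plan is to handle the two assertions separately by the same quadratic-form machinery on $H(k\phi)$; neither claim is deep, but a careful use of Theorem \ref{upper} is needed for the trace-class statement.

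For invertibility, I would first observe that $u \in C_c(\Omega)$ forces $g = e^u$ to be a real, bounded symbol with a uniform positive lower bound: $g \geq e^{-\|u\|_\infty} > 0$ on all of $\Omega$, since $u \equiv 0$ off a compact set. Using that $P_{k\phi}$ fixes $H(k\phi)$, one checks that for real $g$ the Toeplitz operator is self-adjoint via
\begin{equation*}
\langle T_{k\phi,g}f, h\rangle_{k\phi} = \langle gf, h\rangle_{k\phi} = \langle f, gh\rangle_{k\phi} = \langle f, T_{k\phi,g}h\rangle_{k\phi}, \qquad f,h \in H(k\phi).
\end{equation*}
Moreover $\langle T_{k\phi,g}f, f\rangle_{k\phi} = \int_\Omega g|f|^2 e^{-k\phi}\,d\lambda \geq e^{-\|u\|_\infty}\|f\|_{k\phi}^2$, so the spectrum of $T_{k\phi, g}$ lies in $[e^{-\|u\|_\infty}, e^{\|u\|_\infty}]$ and misses $0$. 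Invertibility then follows from the spectral theorem for bounded self-adjoint operators.

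For the trace-class claim, I would first decompose the real symbol as $g = g_+ - g_-$ with $g_\pm \in C_c(\Omega)$ non-negative, so that $T_{k\phi, g} = T_{k\phi, g_+} - T_{k\phi, g_-}$ by linearity and it suffices to treat non-negative symbols. For non-negative $g$ the same quadratic-form identity shows $T_{k\phi, g}$ is positive, so trace class is equivalent to finite trace. Expanding against an orthonormal basis $(e_j)$ of $H(k\phi)$ and using Tonelli to swap sum and integral on a non-negative integrand, I arrive at
\begin{equation*}
\mathrm{tr}(T_{k\phi, g}) = \sum_j \int_\Omega g\,|e_j|^2 e^{-k\phi}\,d\lambda = \int_\Omega g(z)\, K_{k\phi}(z,z)\, e^{-k\phi(z)}\,d\lambda(z).
\end{equation*}
Since $\mathrm{supp}(g)$ is a compact subset $K$ of $\Omega$, Theorem \ref{upper} gives $K_{k\phi}(z,z) e^{-k\phi(z)} \leq C_K k^n$ on $K$, so the right-hand side is bounded by $\|g\|_\infty C_K k^n |K| < \infty$.

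The only real obstacle is the justification of the Tonelli swap in the trace computation, which is clean here because the integrand is non-negative and the local estimate (\ref{Fubini}) already guarantees finiteness of $\sum_j |e_j(z)|^2$ uniformly on compact sets, preventing any pathology in the interchange.
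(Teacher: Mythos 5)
Your proposal is correct and follows essentially the same route as the paper: invertibility via the coercivity bound $\langle T_{k\phi,e^u}f,f\rangle_{k\phi}\geq \left(\inf_\Omega e^u\right)\|f\|_{k\phi}^2$ for the positive self-adjoint operator $T_{k\phi,e^u}$, and the trace-class claim via the orthonormal-basis computation of the trace controlled by the local bound on $K_{k\phi}(z,z)$, which is exactly what the paper means by invoking (\ref{Fubini}) (your use of Theorem \ref{upper} instead, and the explicit $g=g_+-g_-$ reduction with Tonelli, are just filled-in details of the same argument).
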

\begin{proof}
The first claim follows from the fact that $T_{k\phi,e^u}$ is positive definite operator and satisfies $\langle T_{k\phi,e^u}f,f\rangle_{k\phi} \geq C ||f||_{L^2(k\phi)}^2$ with $C=\inf_{\Omega} e^{u}>0$ for all $f\in H(k\phi)$. Second claim is straightforward consequence of (\ref{Fubini}). See also \cite{zhu2007operator}, Proposition 7.11.
\end{proof}

Furthermore we can directly compute the trace and the Fredholm determinant of $T_{k\phi,g}$ for $g\in C_c(\Omega)$, using Berezin transform argument.
\begin{proposition}\label{trdet}
Let $g\in C_c(\Omega)$. Then we have  
\begin{equation}\label{trace}
\text{tr}_{H(k\phi)}(T_{k\phi,g})=\int_{\Omega} K_{k\phi}(z,z)g(z)e^{-k\phi(z)}d\lambda(z) \, ;
\end{equation}
\begin{equation}\label{det}
\det_{H(k\phi)}\left[I+T_{k\phi,g}\right]=\sum_{n=0}^{\infty} \frac{1}{n!} \int_{\Omega^n} \det\left[K_{k\phi}(z_i,z_j)\right]_{i,j} \prod_{l=1}^{n}g(z_l)e^{-k\phi(z_l)}d\lambda(z_1)\cdots d\lambda(z_n).
\end{equation}
\end{proposition}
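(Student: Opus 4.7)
The plan is to compute both the trace and the Fredholm determinant of $I+T_{k\phi,g}$ in an orthonormal basis $(e_j)_j$ of $H(k\phi)$, combining the standard identities
\[
\operatorname{tr}(A)=\sum_j\langle A e_j,e_j\rangle, \qquad \det(I+A)=\sum_{n=0}^{\infty}\operatorname{tr}(\wedge^n A),
\]
valid for trace class $A$, with the Mercer-type expansion $K_{k\phi}(z,w)=\sum_j e_j(z)\overline{e_j(w)}$, Andreief's identity for integrals of determinants, and the Cauchy-Binet formula. Lemma~\ref{lue} ensures $T_{k\phi,g}$ is trace class, so these formal expressions make sense.

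For (\ref{trace}), since $e_j\in H(k\phi)$ and $P_{k\phi}$ is a self-adjoint projection onto $H(k\phi)$,
\[
\langle T_{k\phi,g} e_j,e_j\rangle_{k\phi}=\langle P_{k\phi}(g e_j),e_j\rangle_{k\phi}=\langle g e_j,e_j\rangle_{k\phi}=\int_\Omega g(z)|e_j(z)|^2 e^{-k\phi(z)}\,d\lambda(z).
\]
Summing over $j$, interchanging sum and integral (justified by dominated convergence against $|g(z)|K_{k\phi}(z,z)e^{-k\phi(z)}$, which is integrable on $\operatorname{supp}(g)$ by (\ref{Fubini}) and Theorem~\ref{upper}), and invoking $\sum_j|e_j(z)|^2=K_{k\phi}(z,z)$ yields (\ref{trace}).

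For (\ref{det}), I would use
\[
\operatorname{tr}(\wedge^n T_{k\phi,g})=\sum_{j_1<\cdots<j_n}\det\bigl[\langle T_{k\phi,g} e_{j_i},e_{j_k}\rangle_{k\phi}\bigr]_{i,k=1}^n,
\]
together with $\langle T_{k\phi,g} e_{j_i},e_{j_k}\rangle_{k\phi}=\int_\Omega g(z)e_{j_i}(z)\overline{e_{j_k}(z)} e^{-k\phi(z)}\,d\lambda(z)$. Andreief's identity applied to the signed measure $g\,e^{-k\phi}\,d\lambda$ then gives
\[
\det\bigl[\langle T_{k\phi,g} e_{j_i},e_{j_k}\rangle_{k\phi}\bigr]_{i,k}=\frac{1}{n!}\int_{\Omega^n}\bigl|\det[e_{j_i}(z_l)]_{i,l}\bigr|^2\prod_{l=1}^n g(z_l) e^{-k\phi(z_l)}\,d\lambda(z_l),
\]
and Cauchy-Binet applied to the $n\times\infty$ matrix $[e_j(z_l)]_{l,j}$ yields
\[
\sum_{j_1<\cdots<j_n}\bigl|\det[e_{j_i}(z_l)]_{i,l}\bigr|^2=\det[K_{k\phi}(z_l,z_m)]_{l,m=1}^n,
\]
which produces precisely the $n^{\text{th}}$ term of the series in (\ref{det}).

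The main technical point I anticipate is justifying the interchanges of sums and integrals for general, possibly sign-changing, $g\in C_c(\Omega)$. Absolute convergence of $\sum_n\operatorname{tr}(\wedge^n T_{k\phi,g})$ is automatic from the trace class property. For each fixed $n$, Hadamard's inequality bounds $|\det[K_{k\phi}(z_i,z_j)]_{i,j=1}^n|$ by $\prod_{i=1}^n K_{k\phi}(z_i,z_i)$, and Theorem~\ref{upper} bounds the latter uniformly on $\operatorname{supp}(g)^n$; this provides an integrable dominating function and legitimizes Fubini throughout. The parallel Tonelli argument for the inner sum $\sum_{j_1<\cdots<j_n}|\det[e_{j_i}(z_l)]|^2$ (whose summands are positive) follows in the same way from (\ref{Fubini}), so no separate treatment of $g_\pm$ is required.
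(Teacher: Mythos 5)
Your argument is correct, but it reaches the formulas by a genuinely more hands-on route than the paper. Both proofs start from $\det(I+A)=\sum_{n\ge 0}\operatorname{tr}(\wedge^n A)$ for trace class $A$, but they diverge in how $\operatorname{tr}(\wedge^n T_{k\phi,g})$ is computed: the paper identifies $\wedge^n T_{k\phi,g}$ with the integral operator on $\wedge^n H(k\phi)$ with kernel $\tfrac{1}{n!}\det[K_{k\phi}(z_i,w_j)]\prod_l g(w_l)$, regards $\wedge^n H(k\phi)$ as a subspace of the weighted Bergman space on $\Omega^n$ with kernel $\prod_l K_{k\phi}(z_l,w_l)$, and then cites the reproducing-kernel (Berezin-type) trace formula from Zhu to read off the trace as the integral of the diagonal kernel; you instead expand everything in an orthonormal basis of $H(k\phi)$, compute $\langle T_{k\phi,g}e_{j_i},e_{j_k}\rangle_{k\phi}$ directly using self-adjointness of $P_{k\phi}$, and convert the basis sums into the kernel determinant via Andreief's identity and Cauchy--Binet. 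Your route is more elementary and self-contained: it avoids the exterior-power kernel identification and the citation of the Bergman-space trace theorem, and it handles (\ref{trace}) as essentially the $n=1$ case of the same computation; the price is that you must justify the sum--integral interchanges yourself, which you do correctly (Tonelli on the positive inner sum, Cauchy--Binet pointwise, then Hadamard's inequality plus local boundedness of $K_{k\phi}(z,z)$ on $\operatorname{supp}(g)$ to get an integrable dominating function). Two cosmetic points: for fixed $k$ the local bound (\ref{Fubini}) already suffices, so invoking Theorem \ref{upper} (which is a statement about the $k\to\infty$ normalization) is unnecessary; and in the trace computation the interchange is really a Tonelli argument with $|g|$ rather than dominated convergence, though the estimate you wrote down is exactly the right one.
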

\begin{proof}
It is well known that for each $n\geq0$, $\wedge^n T_{k\phi,g}$ can be identified with the integral operator on $\wedge^n H(k\phi)$ with the kernel $\frac{1}{n!}\det\left[K_{k\phi}(z_i,w_j)\right]_{i,j}\prod_{l=1}^{n}g(w_l)$.
We need to show that 
\begin{equation}
\text{tr}\wedge^n T_{k\phi,g}=\frac{1}{n!} \int_{\Omega^n} \det\left[K_{k\phi}(z_i,z_j)\right]_{i,j} \prod_{l=1}^{n}g(z_l)e^{-k\phi(z_l)}d\lambda(z_1)\cdots d\lambda(z_n)
\end{equation}
for all $n\geq0$. It follows from trace formula in Bergman space if we consider $\wedge^n H(k\phi)$ as a subspace of weighted Bergman space on $\Omega^n$ with kernel $\prod_{l=1}^{n}K_{k\phi}(z_l,w_l)$. See Theorem 6.4 and its corollary in \cite{zhu2007operator}.
\end{proof}

Note that they are the same as classical formulae for integral operators on $L^2$-space. We will use this fact to replace the $L^2$-Fredholm determinant in the classical formulation of DPP theory (see, for example \cite{soshnikov2000determinantal} and \cite{shirai2003random}) to the $H(k\phi)$-Fredholm determinant.

\begin{remark}
Given $\phi\in SPSH(\Omega)$ and $u\in C_c(\Omega)$, consider $H(k\phi)$ and $H(k(\phi+u))$. It is clear that $\langle\cdot,\cdot\rangle_{k\phi}$ and $\langle\cdot,\cdot\rangle_{k(\phi+u)}$ induce equivalent inner product on space of holomorphic functions (if they are finite). Thus $H(k\phi)=H(k(\phi+u))$ as a vector space and two inner products are equivalent. By Lidskii's theorem we know that trace of trace class operator on seperable Hilbert space is a sum of its eigenvalues, hence it is independent of the inner product. Thus in Proposition \ref{trdet} we can replace tr$_{H(k\phi)}$ and $\det_{H(k\phi)}$ by tr$_{H(k(\phi+u))}$ and $\det_{H(k(\phi+u))}$.
\end{remark}

\section{Main Result}

We start with the definition of our DPP.
\begin{definition}\label{DPP}
Let $\Omega$ be a bounded pseudoconvex domain in $\mathbb{C}^n$ and $\phi\in SPSH(\Omega)$. Let $K_{k\phi}(z,w)$ be a reproducing kernel of $H(k\phi)$. We define $\Lambda_k$ as a DPP with kernel $K_{k\phi}$ with respect to $d\mu=e^{-k\phi}d\lambda$. Its existence and uniqueness is guaranteed by the Macchi-Soshnikov theorem and Mercer's theorem.
\end{definition}

Substituting $g=e^{-ku}-1$ and (\ref{corr}) with $K=K_{k\phi}$ into (\ref{correlationfunction}) and comparing it with (\ref{det}), we get
\begin{theorem}\label{Fred}
Let $\Lambda_k$ as in Definition \ref{DPP}. Then for any $u\in C_c(\Omega)$,
\begin{equation}\label{fred}
\mathbb{E}\left[e^{-\langle u,\Lambda_k\rangle} \right] =\det_{H(k\phi)}\left[I+T_{k\phi,(e^{-u}-1)}\right]
\end{equation}
where $\langle u,\Lambda_k\rangle$ denotes the canonical pairing of test functions and Radon measures, i.e. $\langle u,\Lambda_k\rangle=\int_{\Omega} ud\Lambda_k$.
\end{theorem}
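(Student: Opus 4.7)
The plan is to interpret the left-hand side as the expectation of a multiplicative functional against $\Lambda_k$, expand it using the correlation-function formula (\ref{correlationfunction}), and match the resulting series term by term with the Fredholm determinant expansion supplied by Proposition \ref{trdet}.

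I begin by setting $g := e^{-u}-1$. Since $u \in C_c(\Omega)$, the function $g$ also lies in $C_c(\Omega)$, so Proposition \ref{trdet} applies and Lemma \ref{lue} ensures $T_{k\phi,g}$ is trace class, making $\det_{H(k\phi)}[I+T_{k\phi,g}]$ well defined. Because $g$ is compactly supported and $\Lambda_k$ is locally finite, only finitely many points contribute to the product, and one has the almost-sure identity
\[
e^{-\langle u,\Lambda_k\rangle} = \prod_{\lambda\in\Lambda_k}(1+g(\lambda)).
\]

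The next step is to take expectation and apply (\ref{correlationfunction}), then substitute the DPP identity (\ref{corr}) with $K=K_{k\phi}$ together with the reference measure $d\mu = e^{-k\phi}d\lambda$ coming from Definition \ref{DPP}. This turns $\mathbb{E}[e^{-\langle u,\Lambda_k\rangle}]$ into the explicit series
\[
\sum_{n=0}^{\infty}\frac{1}{n!}\int_{\Omega^n}\det[K_{k\phi}(z_i,z_j)]_{i,j}\prod_{l=1}^{n}g(z_l)e^{-k\phi(z_l)}\,d\lambda(z_l),
\]
which is exactly the right-hand side of (\ref{det}). Running that identity backwards identifies the series with $\det_{H(k\phi)}[I+T_{k\phi,(e^{-u}-1)}]$, completing the proof.

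The only real subtlety is justifying the series manipulations when the rank is infinite: one must verify absolute summability and integrability term by term before swapping the sum with the expectation. This follows routinely from the compact support of $g$ combined with Hadamard's inequality applied to $\det[K_{k\phi}(z_i,z_j)]$ and the uniform local bound (\ref{Fubini}) on the diagonal of $K_{k\phi}$, exactly as in the classical $L^2$-theory of DPPs (\cite{soshnikov2000determinantal}, \cite{shirai2003random}). The conceptual point — and the reason the theorem is worth stating — is that Proposition \ref{trdet} lets us reinterpret what would classically appear as an $L^2$-Fredholm determinant as a Fredholm determinant on the Bergman space $H(k\phi)$ itself; this is what will subsequently permit differentiation through the family $k\mapsto\det_{H(k\phi)}[\,\cdot\,]$ using the infinite-dimensional Jacobi formula referenced in the introduction.
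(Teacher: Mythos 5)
Your proposal is correct and follows essentially the same route as the paper: the paper proves Theorem \ref{Fred} precisely by substituting $g=e^{-u}-1$ and the determinantal correlation functions (\ref{corr}) into the multiplicative-functional expansion (\ref{correlationfunction}) and matching the resulting series with the Fredholm determinant formula (\ref{det}) of Proposition \ref{trdet}. Your added remark on absolute convergence via Hadamard's inequality and (\ref{Fubini}) is a harmless (and welcome) elaboration of what the paper leaves implicit.
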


In the proof of the main result, we need to take the derivative of log of (\ref{fred}) where the test function $u_t$ depends smoothly on $t$. Thus we need a following infinite dimensional Jacobi's formula for trace class operators.
\begin{proposition}[\protect{\cite[Corollary 5.2]{simon2005trace}}]
Let $f : A \mapsto \det[I+A]$ for trace class operators $A$. Then it is Fr\'echet differentiable and its derivative is given by
\begin{equation}
Df(A)=\det[I+A]\text{tr}[(I+A)^{-1}\,\cdot\,]
\end{equation}
for $A$ such that $-1\notin \sigma(A)$. In particular 
\begin{equation}\label{Jac}
\frac{d}{dt} \log\det[I+A_t] = \text{tr}\left[(I+A_t)^{-1}\frac{dA_t}{dt}\right]    
\end{equation}
if $I+A_t$ is invertible.
\end{proposition}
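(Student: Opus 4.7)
The plan is to reduce the computation of the Fréchet derivative at an arbitrary $A$ with $-1\notin\sigma(A)$ to an expansion at the origin, by exploiting the multiplicativity of the Fredholm determinant. For a trace class perturbation $B$ with $\|B\|_1$ small, I would write
\begin{equation*}
I + A + B = (I+A)\bigl(I + (I+A)^{-1}B\bigr),
\end{equation*}
which is valid since $(I+A)^{-1}$ is bounded under the spectral hypothesis. Multiplicativity of the determinant then gives
\begin{equation*}
\det[I+A+B] = \det[I+A]\cdot\det\bigl[I + (I+A)^{-1}B\bigr],
\end{equation*}
so everything reduces to expanding the second factor to first order in $B$.

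The ingredients I would assemble first are: (i) the multiplicativity identity $\det[(I+A)(I+B)] = \det[I+A]\det[I+B]$ for trace class $A,B$; and (ii) the basic expansion $\det[I+C] = 1 + \text{tr}\,C + O(\|C\|_1^2)$ as $\|C\|_1\to 0$. The multiplicativity is a classical identity for Fredholm determinants that follows most cleanly from the exterior-algebra definition $\det[I+C] = \sum_{n\geq 0}\text{tr}(\wedge^n C)$ together with the binomial expansion of $\wedge^n((I+A)(I+B))$. The first-order expansion then follows by isolating the $n=0,1$ terms in the same series and bounding the tail using the standard Fredholm inequality $|\text{tr}(\wedge^n C)|\leq \frac{1}{n!}\|C\|_1^n$, which gives $\sum_{n\geq 2}\frac{1}{n!}\|C\|_1^n = O(\|C\|_1^2)$ when $\|C\|_1$ is bounded.

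Combining these, applied with $C=(I+A)^{-1}B$ and using $\|(I+A)^{-1}B\|_1 \leq \|(I+A)^{-1}\|_{\text{op}}\cdot\|B\|_1$, yields
\begin{equation*}
\det[I+A+B] = \det[I+A]\Bigl(1 + \text{tr}\bigl[(I+A)^{-1}B\bigr] + O(\|B\|_1^2)\Bigr).
\end{equation*}
Reading off the linear part identifies the Fréchet derivative as $Df(A)B = \det[I+A]\,\text{tr}[(I+A)^{-1}B]$, with the remainder estimate locally uniform in $A$ because $\|(I+A)^{-1}\|_{\text{op}}$ is continuous in $A$ near any $A_0$ with $-1\notin\sigma(A_0)$. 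The second displayed formula is then just the chain rule applied to $t\mapsto\log\det[I+A_t]$, dividing through by $\det[I+A_t]$.

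I expect the only real technical point to be the careful verification of the $O(\|B\|_1^2)$ error, in particular its local uniformity in $A$, which is what upgrades Gâteaux differentiability to Fréchet differentiability. Once the tail estimate on the exterior-algebra expansion is in hand, everything else is formal bookkeeping via factorization and the chain rule.
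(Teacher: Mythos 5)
Your proof is correct, but note that the paper does not prove this proposition at all: it is quoted directly from \cite[Corollary 5.2]{simon2005trace}, so there is no internal argument to compare with, and what you have supplied is essentially the standard proof behind that citation. The factorization $I+A+B=(I+A)\bigl(I+(I+A)^{-1}B\bigr)$, multiplicativity of the Fredholm determinant, and the tail bound $|\operatorname{tr}(\wedge^{n}C)|\leq \|C\|_{1}^{n}/n!$ giving $\det[I+C]=1+\operatorname{tr}C+O(\|C\|_{1}^{2})$ together yield exactly the derivative formula at every $A$ with $-1\notin\sigma(A)$, which is all the paper needs. Two minor remarks: the multiplicativity identity is the one ingredient you invoke rather than prove (the clean justifications are finite-rank approximation plus $\|\cdot\|_{1}$-continuity of $\det$, or a Lidskii/analyticity argument), which is acceptable since it is classical; and your appeal to ``local uniformity in $A$'' is not actually what distinguishes Fr\'echet from G\^ateaux differentiability --- the $O(\|B\|_{1}^{2})$ bound at the fixed point $A$, being uniform over all small perturbations $B$, already gives Fr\'echet differentiability there. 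Finally, when you pass to (\ref{Jac}) by the chain rule you are implicitly assuming $t\mapsto A_{t}$ is differentiable as a trace-class-valued map (in $\|\cdot\|_{1}$, not merely in operator norm); this is the form in which the formula is used in the proof of Theorem \ref{main}, where it does hold because the symbols $e^{-ktu}-1$ are compactly supported and smooth in $t$, but it is worth stating as a hypothesis.
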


We will identify the operator $(I+A_t)^{-1}\frac{dA_t}{dt}$ in (\ref{Jac}) in the proof, by the following key lemma.

\begin{lemma}[\protect{\cite[Proposition 1]{englivs2008toeplitz}}]\label{keylem}
Let $u\in C_c(\Omega)$. Then
\begin{equation}
T^{-1}_{k\phi,e^{-ku}}K_{k\phi,w}(z)=K_{k(\phi+u),w}(z)    
\end{equation}
where $T^{-1}_{k\phi,e^{-ku}}$ acts on $K_{k\phi,w}$ considered as a function of $z$.  
\end{lemma}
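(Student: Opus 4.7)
My plan is to establish the equivalent identity $T_{k\phi,e^{-ku}} K_{k(\phi+u),w} = K_{k\phi,w}$ (as functions of $z$), from which the lemma follows immediately by applying the bounded inverse $T^{-1}_{k\phi,e^{-ku}}$ on both sides; its existence was just verified in Lemma \ref{lue}.

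The preliminary observation is that because $u \in C_c(\Omega)$, the function $e^{-ku}$ is bounded above and bounded away from zero on $\Omega$. Hence the weights $e^{-k\phi}$ and $e^{-k(\phi+u)}$ induce equivalent $L^2$-norms on $\mathcal{O}(\Omega)$, so $H(k\phi) = H(k(\phi+u))$ as vector spaces. In particular $K_{k(\phi+u),w}$ (viewed in $z$) already lies in $H(k\phi)$, so the expression $T_{k\phi,e^{-ku}} K_{k(\phi+u),w}$ is well-defined as an element of $H(k\phi)$.

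The core computation is to pair the left-hand side against an arbitrary $f \in H(k\phi) = H(k(\phi+u))$ in $\langle\cdot,\cdot\rangle_{k\phi}$. Unpacking the definition $T_{k\phi,e^{-ku}} g = P_{k\phi}(e^{-ku} g)$, using the self-adjointness of $P_{k\phi}$ on $L^2(e^{-k\phi}d\lambda)$ together with $P_{k\phi} f = f$, the projection drops out, and the factor $e^{-ku}$ absorbs into the weight to produce $e^{-k(\phi+u)}$. This rewrites the pairing as $\langle K_{k(\phi+u),w}, f \rangle_{k(\phi+u)}$, which equals $\overline{f(w)}$ by the reproducing property in $H(k(\phi+u))$. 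The same value arises from $\langle K_{k\phi,w}, f\rangle_{k\phi}$ by the reproducing property in $H(k\phi)$, so since $f$ was arbitrary the identity follows.

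The only conceptual subtlety is the identification $H(k\phi) = H(k(\phi+u))$ as vector spaces, which relies essentially on $u$ having compact support so that $e^{\pm ku}$ is globally bounded; without this, the target $K_{k(\phi+u),w}$ would not \emph{a priori} belong to the space on which $T_{k\phi,e^{-ku}}$ acts, and the statement would not even be well-posed. Once this point is clarified, the proof reduces to a short manipulation of inner products together with two applications of the reproducing property, so I do not expect any genuine obstacle.
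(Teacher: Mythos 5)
Your proof is correct and is essentially the paper's argument run in the reverse direction: the paper sets $h=T^{-1}_{k\phi,e^{-ku}}K_{k\phi,w}$ and checks via the same chain of identities (self-adjointness of $P_{k\phi}$, $P_{k\phi}f=f$, absorbing $e^{-ku}$ into the weight, reproducing properties, and $H(k\phi)=H(k(\phi+u))$ as vector spaces) that $h$ reproduces point evaluation in $H(k(\phi+u))$, whereas you verify $T_{k\phi,e^{-ku}}K_{k(\phi+u),w}=K_{k\phi,w}$ and then invert using Lemma \ref{lue}. The ingredients and manipulations coincide, so no substantive difference.
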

\begin{proof}
By Lemma \ref{lue}, $T^{-1}_{k\phi,e^{-ku}}$ is invertible. Let $h=T^{-1}_{k\phi.e^{-ku}}K_{k\phi,w}\in H(k\phi)$. Then for any $f\in H(k\phi)=H(k(\phi+u))$, we have
\begin{align}
\langle f,K_{k(\phi+u),w} \rangle_{k(\phi+u)}=f(w)=\langle f,K_{k\phi,w} \rangle_{k\phi}=\langle f,T_{k\phi,e^{-ku}}h \rangle_{k\phi} = \langle f,P_{k\phi}e^{-ku}h \rangle_{k\phi}\\
= \langle P_{k\phi}f,e^{-ku}h \rangle_{k\phi}
= \langle f,e^{-ku}h \rangle_{k\phi}
=\int_{\Omega}f(z)\overline{h(z)}e^{-k(\phi(z)+u(z))}d\lambda(z)=\langle f,h \rangle_{k(\phi+u)}.
\end{align}
Note that in the computation we consider $f$ and $e^{-ku}h$ as elements in $L^2(e^{-k\phi}d\lambda)$. This proves the claim. In fact in \cite{englivs2008toeplitz} this is proved for more general types of operators.
\end{proof}

Finally, to state the main result, we need to restrict the class of test functions. The reason for this restriction is explained in the remark following Proposition \ref{eq}.

\begin{definition}\label{u}
Given $\phi\in SPSH(\Omega)$, $u\in C^{\infty}_c(\Omega)$ is called $\phi$-admissible test function if $\phi+u\in SPSH(\Omega)$.    
\end{definition}

For example, if $\text{supp}(u)\subset\subset\Omega$ and $\sup_{\Omega}|D^2u|$ is less than minimum eigenvalue of $\partial\overline{\partial}\phi$ in $\text{supp}(u)$ then $u$ is $\phi$-admissible. It is clear that if $u$ is $\phi$-admissible, $tu$ is $\phi$-admissible for all $t\in[0,1]$. 

For functions of the form $\phi+u$, with $\phi$-admissible $u$, we introduce the Monge-Amp\`ere energy functional.
\begin{definition}
Let $u$ be $\phi$-admissible. The energy $\mathcal{E}(\phi+u)$ is defined by
\begin{equation}\label{en}
\mathcal{E}(\phi+u)=\frac{1}{(n+1)!}\sum_{j=0}^{n}\int_{\Omega}u(dd^c(\phi+u))^j\wedge(dd^c\phi)^{n-j}.
\end{equation}
It is finite since $u\in C_c(\Omega)$. It is normalized so that $\mathcal{E}(\phi)=0$.
\end{definition}
The functional $\mathcal{E}$ is a primitive of the normalized Monge-Amp\`ere operator in the following sense.
\begin{proposition}\label{eq}
Let $u$ be $\phi$-admissible and $t\in[0,1]$. Then
\begin{equation}
\left.\frac{d}{dt}\right|_{t} \mathcal{E}(\phi+tu) = \frac{1}{n!}\int_{\Omega}u(dd^c (\phi+tu))^n 
\end{equation}
\end{proposition}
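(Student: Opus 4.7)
The plan is to expand the Monge-Amp\`ere wedge powers binomially so that $\mathcal{E}(\phi+tu)$ becomes an explicit polynomial in $t$, after which the claimed derivative can be read off by a short combinatorial identity. Setting $\phi_t := \phi + tu$ we have $dd^c \phi_t = dd^c\phi + t\, dd^c u$, and since $(1,1)$-forms are of even total degree they commute under $\wedge$, so
\[
(dd^c \phi_t)^j = \sum_{i=0}^{j} \binom{j}{i}\, t^i\, (dd^c u)^i \wedge (dd^c \phi)^{j-i}.
\]
I would then introduce the abbreviation $A_i := \int_\Omega u\,(dd^c u)^i \wedge (dd^c \phi)^{n-i}$, each of which is finite because $u \in C^\infty_c(\Omega)$. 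Substituting the expansion into the definition (\ref{en}) of $\mathcal{E}(\phi+tu)$ and interchanging the two finite sums gives
\[
\mathcal{E}(\phi+tu) = \frac{1}{(n+1)!} \sum_{i=0}^n \left(\sum_{j=i}^n \binom{j}{i}\right) t^{i+1} A_i.
\]

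Next, applying the hockey-stick identity $\sum_{j=i}^n \binom{j}{i} = \binom{n+1}{i+1}$ turns this into the clean polynomial
\[
\mathcal{E}(\phi+tu) = \frac{1}{(n+1)!} \sum_{i=0}^n \binom{n+1}{i+1}\, t^{i+1} A_i,
\]
which I would differentiate term by term. The factorial identity $(i+1)\binom{n+1}{i+1} = (n+1)\binom{n}{i}$ absorbs the factor $(n+1)$ into the prefactor and yields
\[
\frac{d}{dt}\, \mathcal{E}(\phi+tu) = \frac{1}{n!} \sum_{i=0}^n \binom{n}{i}\, t^i A_i.
\]
The last step is to recognize the right-hand side as the binomial expansion of $\frac{1}{n!} \int_\Omega u\,(dd^c \phi + t\, dd^c u)^n$, which is precisely $\frac{1}{n!}\int_\Omega u\,(dd^c \phi_t)^n$.

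Since $u$ and $\phi$ are both smooth, every form in sight is smooth, so there are no regularization or integration-by-parts issues and the argument is purely combinatorial once the binomial expansion is in place. The $\phi$-admissibility hypothesis does not enter this particular computation; it serves only to give $(dd^c \phi_t)^n$ its intended meaning as a positive Monge-Amp\`ere measure and thereby to make $\mathcal{E}$ the natural primitive to consider. The only place a careless error is easy to make is the bookkeeping in the two binomial identities $\sum_{j=i}^n \binom{j}{i} = \binom{n+1}{i+1}$ and $(i+1)\binom{n+1}{i+1} = (n+1)\binom{n}{i}$, both of which, however, reduce to routine factorial manipulations.
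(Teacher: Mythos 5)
Your proof is correct, but it is organized differently from the paper's. The paper establishes the pointwise form-level identity
\begin{equation*}
\left.\frac{d}{dt}\right|_{t}\frac{1}{(n+1)!}\sum_{j=0}^{n} tu\,(dd^c(\phi+tu))^j\wedge (dd^c\phi)^{n-j}
=\frac{1}{n!}\,u\,(dd^c(\phi+tu))^{n}
\end{equation*}
by differentiating each term, rewriting $t\,dd^cu=dd^c(\phi+tu)-dd^c\phi$, and letting the resulting double sum telescope (a Bott--Chern--type computation), and only then integrates over $\Omega$. You instead expand $(dd^c(\phi+tu))^j$ binomially, which exhibits $\mathcal{E}(\phi+tu)$ as the explicit polynomial $\frac{1}{(n+1)!}\sum_{i=0}^{n}\binom{n+1}{i+1}t^{i+1}A_i$ with $A_i=\int_\Omega u\,(dd^cu)^i\wedge(dd^c\phi)^{n-i}$, and then the hockey-stick identity and $(i+1)\binom{n+1}{i+1}=(n+1)\binom{n}{i}$ do the rest; both identities and the final re-summation into $\frac{1}{n!}\int_\Omega u\,(dd^c(\phi+tu))^n$ check out. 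What your route buys is that differentiation in $t$ becomes differentiation of a genuine one-variable polynomial with constant coefficients, so there is no need to even implicitly justify exchanging $d/dt$ with the integral, and it makes transparent that $t\mapsto\mathcal{E}(\phi+tu)$ is a polynomial of degree $n+1$. What the paper's telescoping argument buys is that it works entirely at the level of forms before integration and mirrors the Bott--Chern formalism it cites, which is the structure that generalizes to the line-bundle setting; your observation that $\phi$-admissibility plays no role in the algebra (only in interpreting $(dd^c(\phi+tu))^n$ as a positive measure) is also consistent with the paper.
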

\begin{proof}
We prove the following identity:
\begin{equation}\label{bc}
\left.\frac{d}{dt}\right|_{t} \frac{1}{l!}\sum_{j=0}^{l-1} tu (dd^c(\phi+tu))^j\wedge (dd^c\phi)^{l-1-j}=\frac{1}{(l-1)!}u(dd^c(\phi+tu))^{l-1}.
\end{equation}
The result follows from (\ref{bc}) by substituting $l=n+1$ and integrating over $\Omega$ (which is finite since $u\in C_c(\Omega)$).
Note that the following proof essentially computes the Bott-Chern forms for Chern character polynomials for line bundles (see \cite[(2.20)]{eum2025partition}).
\begin{align*}
&\left.\frac{d}{dt}\right|_{t} \frac{1}{l!}\sum_{j=0}^{l-1} tu (dd^c(\phi+tu))^j\wedge (dd^c\phi)^{l-1-j}=\\
&=\frac{1}{l!} \sum_{j=0}^{l-1}u (dd^c(\phi+tu))^j\wedge (dd^c\phi)^{l-1-j}\\
&\quad+\frac{1}{l!}\sum_{j=1}^{l-1}uj(tdd^cu)\wedge(dd^c(\phi+tu))^{j-1}\wedge (dd^c\phi)^{l-1-j} \\
&=\frac{1}{l!} \sum_{j=0}^{l-1}u (dd^c(\phi+tu))^j\wedge (dd^c\phi)^{l-1-j} \\
&\quad +\frac{1}{l!}\sum_{j=1}^{l-1}uj(dd^c(\phi+tu)-
dd^c\phi)\wedge(dd^c(\phi+tu))^{j-1}\wedge (dd^c\phi)^{l-1-j} \\
&=\frac{1}{l!} \sum_{j=0}^{l-1}u (dd^c(\phi+tu))^j\wedge (dd^c\phi)^{l-1-j} \\
&\quad +\frac{1}{l!}\sum_{j=1}^{l-1}uj(dd^c(\phi+tu))^{j}\wedge (dd^c\phi)^{l-1-j}-\frac{1}{l!}\sum_{j=1}^{l-1}uj(dd^c(\phi+tu))^{j-1}\wedge (dd^c\phi)^{l-j} \\
&=\frac{1}{l!}\sum_{j=0}^{l-1}u (dd^c(\phi+tu))^j\wedge (dd^c\phi)^{l-1-j}-\frac{1}{l!}\sum_{j=1}^{l-2}u(dd^c(\phi+tu))^{j}\wedge (dd^c\phi)^{l-1-j}\\
&\quad +\frac{(l-1)}{l!}u(dd^c(\phi+tu))^{l-1}-\frac{1}{l!}u(dd^c\phi)^{l-1}\\
&=\frac{1}{(l-1)!}u(dd^c(\phi+tu))^{l-1}.
\end{align*}
\end{proof}

\begin{remark}
In view of results in \cite{berman2010growth,berman2009bergman,berman2014determinantal}, it is natural to expect that we can consider arbitrary $C^{\infty}_c(\Omega)$ test function $u$ and use plurisubharmonic envelope (psh projection) to define $\mathcal{E}_{eq}$ and prove similar results. More precisely, we could use $P\varphi=\left(\sup\{\psi\in PSH(\Omega): \psi\leq\varphi \;\;\text{on}\;\; \Omega\}\right)^*$ to define $\mathcal{E}_{eq}(\phi+u):=\mathcal{E}(P(\phi+u))$. However in general $P(\phi+u)-\phi$ is not compactly supported nor continuous \cite{walsh1968continuity}, hence arguments using the fact that $u\in C_c(\Omega)$ break down in this case. A more natural domain of energy functionals such as $\mathcal{E}$ on $\Omega$ would be Cegrell class \cite{aahag2012dirichlet}, but then we have to impose further technical assumptions on $\phi$.
\end{remark}

Now we prove our main result.
\begin{theorem}\label{main}
Let $\Lambda_k$ be DPP defined in Definition \ref{DPP}, $\phi\in SPSH(\Omega)$ and $u$ be $\phi$-admissible. Then we have
\begin{equation}
\lim_{k\rightarrow\infty}\frac{1}{k^{n+1}}\log \mathbb{E}\left[e^{-k\langle u,\Lambda_k\rangle} \right] = \frac{-1}{(n+1)!}\sum_{j=0}^{n}\int_{\Omega}u(dd^c(\phi+u))^j\wedge(dd^c\phi)^{n-j}.
\end{equation}
\end{theorem}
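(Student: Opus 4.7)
The plan is to apply Theorem \ref{Fred} with $u$ replaced by $ku$, which gives
\begin{equation*}
\mathbb{E}\left[e^{-k\langle u,\Lambda_k\rangle}\right] = \det_{H(k\phi)}\left[I + T_{k\phi, e^{-ku}-1}\right] = \det_{H(k\phi)}\left[T_{k\phi, e^{-ku}}\right],
\end{equation*}
and then to interpolate by setting
\begin{equation*}
F_k(t) := \log\det_{H(k\phi)}\left[T_{k\phi, e^{-ktu}}\right], \qquad t \in [0,1].
\end{equation*}
Then $F_k(0) = 0$ and the quantity of interest equals $k^{-(n+1)}F_k(1) = k^{-(n+1)}\int_0^1 F_k'(t)\,dt$. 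Note that $tu$ is $\phi$-admissible for every $t \in [0,1]$, which will be used repeatedly.

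First I would compute $F_k'(t)$ via the infinite-dimensional Jacobi formula (\ref{Jac}). Writing $A_t := T_{k\phi, e^{-ktu}-1}$ so that $I + A_t = T_{k\phi, e^{-ktu}}$, Lemma \ref{lue} guarantees invertibility and $\frac{dA_t}{dt} = T_{k\phi, -kue^{-ktu}}$ by differentiating the integral kernel in $t$. The key step is to rewrite the resulting composition
\begin{equation*}
(I+A_t)^{-1}\frac{dA_t}{dt} = T^{-1}_{k\phi, e^{-ktu}}\, T_{k\phi, -kue^{-ktu}}
\end{equation*}
as a Toeplitz operator on a different Bergman space. Applying $T^{-1}_{k\phi, e^{-ktu}}$ inside the integral defining $T_{k\phi, -kue^{-ktu}} f$ and using Lemma \ref{keylem} to replace $K_{k\phi}(\cdot,w)$ by $K_{k(\phi+tu)}(\cdot,w)$, one obtains the identification
\begin{equation*}
T^{-1}_{k\phi, e^{-ktu}}\, T_{k\phi, -kue^{-ktu}} = T_{k(\phi+tu), -ku},
\end{equation*}
an operator on $H(k(\phi+tu)) = H(k\phi)$ with symbol $-ku \in C_c(\Omega)$. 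By Proposition \ref{trdet} applied with the weight $k(\phi+tu)$, together with the remark that the trace does not depend on the choice of equivalent inner product, this yields
\begin{equation*}
F_k'(t) = -k\int_\Omega u(z)\, K_{k(\phi+tu)}(z,z)\, e^{-k(\phi+tu)(z)}\, d\lambda(z).
\end{equation*}

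Finally I would divide by $k^{n+1}$ and pass to the limit inside the $t$-integral. For each fixed $t \in [0,1]$, Proposition \ref{convBerg} applied with the weight $\phi+tu \in SPSH(\Omega)$ gives pointwise convergence
\begin{equation*}
\frac{F_k'(t)}{k^{n+1}} \longrightarrow -\frac{1}{n!}\int_\Omega u\,(dd^c(\phi+tu))^n = -\left.\frac{d}{ds}\right|_{s=t}\mathcal{E}(\phi+su),
\end{equation*}
the last equality being Proposition \ref{eq}. Integrating from $0$ to $1$ and using $\mathcal{E}(\phi) = 0$ produces $-\mathcal{E}(\phi+u)$, which unwinds to the desired limit. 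The main obstacle is justifying the interchange of the $k\to\infty$ limit with the $t$-integral: for this I would apply Theorem \ref{upper} with the weight $\phi+tu$ on a fixed compact neighborhood of $\text{supp}(u)$, and use that the constant in that theorem depends continuously on the weight and its derivatives while $\{\phi+tu : t \in [0,1]\}$ forms a continuous curve in $C^\infty$ on compact subsets. This yields a bound on $k^{-n}K_{k(\phi+tu)}(z,z)e^{-k(\phi+tu)(z)}$ over $\text{supp}(u)$ that is uniform in both $t$ and $k$, after which dominated convergence closes the argument.
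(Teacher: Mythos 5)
Your proposal is correct and follows essentially the same route as the paper: rewrite the expectation as a Fredholm determinant, interpolate in $t$, apply the infinite-dimensional Jacobi formula together with Lemma \ref{keylem} to identify $T^{-1}_{k\phi,e^{-ktu}}T_{k\phi,-kue^{-ktu}}=T_{k(\phi+tu),-ku}$, use the trace formula, and pass to the limit via Proposition \ref{convBerg}, Theorem \ref{upper}, dominated convergence, and Proposition \ref{eq}. Your explicit remark that the bound from Theorem \ref{upper} is uniform in $t$ because the constant depends continuously on the weight is a point the paper's proof states more tersely, and is a welcome clarification.
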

\begin{proof}
By Theorem \ref{Fred}, we have    
\begin{equation}
\frac{1}{k^{n+1}}\log \mathbb{E}\left[e^{-k\langle u,\Lambda_k\rangle} \right] =\frac{1}{k^{n+1}}\log\det_{H(k\phi)}\left[I+P_{k\phi}(e^{-ku}-1)\right].
\end{equation}
Let $t\in[0,1]$. We claim that
\begin{align}
&\left.\frac{d}{dt}\right|_{t}\frac{1}{k^{n+1}}\log\det_{H(k\phi)}\left[I+P_{k\phi}(e^{-ktu}-1)\right] \nonumber\\
&=\int_{\Omega}\frac{K_{k(\phi+tu)}(w,w)}{k^n}(-u(w))e^{-k(\phi(w)+tu(w))}d\lambda(w).\label{deriv}
\end{align}
First we prove this claim. By (\ref{Jac}), 
\begin{align}
&\left.\frac{d}{dt}\right|_{t} \log\det_{H(k\phi)}\left[I+P_{k\phi}(e^{-ktu}-1)\right] \\
&=\text{tr}_{H(k\phi)}\left[(I+P_{k\phi}(e^{-ktu}-1))^{-1}P_{k\phi}(-kue^{-ktu}) \right].\label{tr}
\end{align}
Since $P_{k\phi}=I$ on $H(k\phi), \,(I+P_{k\phi}(e^{-ktu}-1))^{-1}=(P_{k\phi}e^{-ktu})^{-1}=T^{-1}_{k\phi,e^{-ktu}}$ on $H(k\phi)$.

Now we will identify the operator inside of tr in (\ref{tr}) as an integral operator on $H(k\phi)$, which is essentially a Toeplitz operator on $H(k(\phi+tu))$ with symbol $-ku$.
Let $(e_j)_j$ be an orthonormal basis for $H(k\phi)$. For $f\in H(k\phi)$, 
\begin{align}
&P_{k\phi}(-kue^{-ktu})f(z)=\int_{\Omega} K_{k\phi}(z,w)(-ku(w)e^{-ktu(w)})f(w)e^{-k\phi(w)}d\lambda(w) \\
&=\int_{\Omega} \sum_{j}e_j(z)\overline{e_j(w)}(-ku(w)e^{-ktu(w)})f(w)e^{-k\phi(w)}d\lambda(w) \\
&=\sum_{j} e_j(z)\int_{\Omega}\overline{e_j(w)}(-ku(w)e^{-ktu(w)})f(w)e^{-k\phi(w)}d\lambda(w).
\end{align}
Hence 
\begin{align}
&T^{-1}_{k\phi,e^{-ktu}}P_{k\phi}(-kue^{-ktu})f(z) \\
&=\sum_{j} \left(T^{-1}_{k\phi,e^{-ktu}}e_j\right)(z)\int_{\Omega}\overline{e_j(w)}(-ku(w)e^{-ktu(w)})f(w)e^{-k\phi(w)}d\lambda(w)\\
&=\int_{\Omega}\sum_{j} \left(T^{-1}_{k\phi,e^{-ktu}}e_j\right)(z)\overline{e_j(w)}(-ku(w)e^{-ktu(w)})f(w)e^{-k\phi(w)}d\lambda(w).
\end{align}
Interchange of the order of integration, summation and action of bounded operator can be justified since by (\ref{Fubini}),

\begin{align}
&\sum_{j}\left|\int_{\Omega}\overline{e_j(w)}(-ku(w)e^{-ktu(w)})f(w)e^{-k\phi(w)}d\lambda(w)\right|^2\\
&\leq \sum_{j}C\int_{\text{supp}(u)}|e_j(w)|^2d\lambda(w)=C\int_{\text{supp}(u)}\sum_{j}|e_j(w)|^2d\lambda(w)<\infty
\end{align}
for some constant $C$.

By Lemma \ref{keylem}, 
\begin{equation}
\sum_{j} \left(T^{-1}_{k\phi,e^{-ktu}}e_j\right)(z)\overline{e_j(w)}=T^{-1}_{k\phi,e^{-ktu}}K_{k\phi}(z,w)=K_{k(\phi+tu)}(z,w).
\end{equation}
This implies
\begin{align}
&T^{-1}_{k\phi,e^{-ktu}}P_{k\phi}(-kue^{-ktu})f(z) \\
&=\int_{\Omega}K_{k(\phi+tu)}(z,w)(-ku(w)e^{-ktu(w)})f(w)e^{-k\phi(w)}d\lambda(w)\\
&=\int_{\Omega}K_{k(\phi+tu)}(z,w)(-ku(w))f(w)e^{-k(\phi(w)+tu(w))}d\lambda(w)\\
&=T_{k(\phi+tu),-ku}f(z).
\end{align}
Thus by trace formula (\ref{trace}),
\begin{align}
&\frac{1}{k^{n+1}}\text{tr}_{H(k\phi)}\left[T^{-1}_{k\phi,e^{-ktu}}P_{k\phi}(-kue^{-ktu}) \right]
=\frac{1}{k^{n+1}}\text{tr}_{H(k(\phi+tu))}T_{k(\phi+tu),-ku}\\
&=\frac{1}{k^{n+1}}\int_{\Omega}K_{k(\phi+tu)}(z,z)(-ku(z))e^{-k(\phi(z)+tu(z))}d\lambda(z)\\
&=\int_{\Omega}\frac{K_{k(\phi+tu)}(z,z)}{k^n}(-u(z))e^{-k(\phi(z)+tu(z))}d\lambda(z).
\end{align}
This proves (\ref{deriv}). By integrating (\ref{deriv}) over $0\leq t\leq 1$, we get 
\begin{align}
&\frac{1}{k^{n+1}}\log \mathbb{E}\left[e^{-k\langle u,\Lambda_k\rangle} \right]=\int_{0}^{1}dt\left.\frac{d}{dt}\right|_{t} \frac{1}{k^{n+1}}\log \mathbb{E}\left[e^{-k\langle tu,\Lambda_k\rangle} \right]\\
&=\int_{0}^{1}dt\int_{\Omega}\frac{K_{k(\phi+tu)}(z,z)}{k^n}(-u(z))e^{-k(\phi(z)+tu(z))}d\lambda(z).
\end{align}
Proposition \ref{convBerg} implies 
\begin{equation}\label{convber}
\int_{\Omega}\frac{K_{k(\phi+tu)}(z,z)}{k^n}(-u(z))e^{-k(\phi(z)+tu(z))}d\lambda(z) \rightarrow \frac{1}{n!}\int_{\Omega}(-u)(dd^c (\phi+tu))^n   
\end{equation}
as $k\rightarrow\infty$. Since $u\in C_c(\Omega)$ the integrand in the LHS of (\ref{convber}) is uniformly bounded, by Theorem \ref{upper}. By dominated convergence theorem and Proposition \ref{eq}
\begin{equation}
\frac{1}{k^{n+1}}\log \mathbb{E}\left[e^{-k\langle u,\Lambda_k\rangle} \right] \rightarrow \int_{0}^{1}dt\frac{1}{n!}\int_{\Omega}(-u)(dd^c (\phi+tu))^n = -\mathcal{E}(\phi+u).
\end{equation}
Formula (\ref{en}) of $\mathcal{E}$ then completes the proof.
\end{proof}

\bibliographystyle{alpha}
\bibliography{References}

\begin{acknowledgement}
This work was partially supported by the National Research Foundation of Korea (NRF) grant funded by the Korea government(MSIT) RS-2024-00346651.
\end{acknowledgement}

\end{document}